\newtheorem{thm}{Theorem}[section]
\newtheorem{conj.}[thm]{Conjecture}
\newtheorem{lem}[thm]{Lemma}
\theoremstyle{definition}
\newtheorem{defn}[thm]{Definition}
\theoremstyle{remark}
\numberwithin{equation}{section}
\newtheorem{exa}[thm]{Example}
\newcommand{\h}{\mathcal{H}}
\begin{document}

\title[On the duality of c-fusion frames in Hilbert spaces]
{On the duality of c-fusion frames in Hilbert spaces}%
\author[A. Rahimi, Z. Darvishi, B. Daraby]{A. Rahimi, Z. Darvishi, B. Daraby}
\address{Department of Mathematics, University of Maragheh, Maragheh, Iran.}

\email{rahimi@maragheh.ac.ir}
\address{Department of Mathematics, University of Maragheh, Maragheh, Iran.}
\email{darvishi$\_$z@ymail.com}
\address{Department of Mathematics, University of Maragheh, Maragheh, Iran.}
\email{bdaraby@maragheh.ac.ir}
%\address{$^\dagger$ Acoustics Research Institute, Austrian Academy of Sciences, Wohllebengasse 12-14, 1040 Wien, Austria.}
%\address{$^{2}$ Department of Mathematics, second University, P. O. Box 22222, City, Country.}
%\email{peter.balazs@oeaw.ac.at}
%\email{bayerd@kfs.oeaw.ac.at}

\subjclass[2010]{Primary 42C40; Secondary 46C99, 41A65.}
\keywords{frame, fusion frame, continuous frame, c-fusion frame, dual frame, Q-dual. }
%_______________________________________________________________________________
\begin{abstract}
Improving and extending the concept of dual for frames, fusion frames and continuous frames,
      the notion of dual for continuous fusion frames in Hilbert spaces will be studied. It will be shown that generally  the dual of c-fusion frames may not be defined. To overcome this problem, the new concept namely Q-dual for c-fusion frames will be defined and some of its properties will be investigated.
\end{abstract}
%_____________________________________________________________________________
\maketitle
\section{Introduction}
Frames arise in many applications in both pure and applied mathematics. Frame theory is nowadays a fundamental research area in mathematics, computer
science and engineering with many exciting and interesting applications in a variety of different
fields. Frames were introduced by Duffin and Scheaffer \cite{duf}
in the context of non-harmonic Fourier series. The importance and  significance of frames for signal processing
has been revealed in the pioneering work by Daubechies, Grossman and
Meyer in 1986,  \cite{mey}. Since then frame theory has quickly become the key approach whenever
redundant, yet stable, representations of data are required.

One goal in frame theory is to construct large frames by fusing
�smaller� frames, and, in fact, this was the original reasoning for introducing fusion
frames by Casazza and Kutyniok, \cite{cas}.

The concept of generalization of frames was proposed by Kaiser \cite{kai}
and independently by Ali, Antoine and Gazeau \cite{ali} to a family indexed by some locally compact space
endowed with a Radon measure. These frames are known as continuous frames. Gabardo
and Han in \cite{gab} called these frames \emph{frames associated with measurable spaces} and Askari-Hemmat, Dehghan and Radjabalipour in \cite{askari} called them \textit{generalized frames} and they are linked to coherent
states in mathematical physics, \cite{ali}. For more studies, the interested reader can also refer to \cite{anto, for,balaz-rahimi,rahimi-najati}.

Combining  fusion and continuous frames was the main motivation for defining the new concept named c-fusion frames \cite{far2,far}.  \par
Throughout this paper, $\h$ is a Hilbert space, $\mathcal{B}(\h)$ the set of all bounded operators on $\h$ and
$\mathbb{H}$ the collection of all closed subspaces of $\h$.
Also, $(X,\mu)$ is a measure space with positive measure $\mu$
and $v:X\to[0,+\infty)$ a measurable mapping such that $v\neq0$ almost every where.
%%%%%%%%%%%%%%%%%%%%%%%%%%%%%%%%%%
\subsection{Frames in Hilbert spaces.}
\begin{defn}
A countable family $\{h_i\}_{i\in I}$ in a Hilbert space  $\h$ is called   a frame for $\h$
 if there exist  constants $0<A\leq B<+\infty$ such that
 $$A\|h\|^2\leq\sum_{i\in I}|\langle h,h_i\rangle|^2\leq B\|h\|^2,$$ for all $h\in\h$.
 \end{defn}
If the upper bound condition holds, not necessarily the lower bound,
it is called a Bessel sequence.
If $\{h_i\}_{i\in I}$ is a Bessel sequence,
 then the following operators are bounded
$$T_F:l^2(I)\to\h,\hspace{.5cm}T_F\{c_i\}_{i\in I}=\sum_{i\in I}c_ih_i,\quad \{c_i\}_{i\in I}\in l^2(I) $$
$$T_F^*:\h\to l^2(I),\hspace{.5cm}T_F^*h=\{\langle h,h_i\rangle\}_{i\in I},\quad h\in\h$$
and
$$S_F:\h\to\h,\hspace{.5cm}S_Fh=\sum_{i\in I}\langle h,h_i\rangle h_i,\quad h\in\h.$$
These operators are called synthesis operator,
analysis operator and frame operator, respectively.

\hspace{-.2cm}If $\{h_i\}_{i\in I}$ is a frame,
then $S_F$ is bounded, positive and invertible operator and
the following ``reconstruction formula" holds for all $h\in\h$
$$h=\sum_{i\in I}\langle h,S_F^{-1}h_i\rangle h_i=\sum_{i\in I}\langle h,h_i\rangle S_F^{-1}h_i.$$
The family $\{S^{-1}h_i\}_{i\in I}$ is also a frame for $\h$,
called the canonical dual frame of $\{h_i\}_{i\in I}$.
In general, the Bessel sequence $\{k_i\}_{i\in I}$ is
called a dual of the Bessel sequence $\{h_i\}_{i\in I}$,
if the following formula holds
$$h=\sum_{i\in I}\langle h,k_i\rangle h_i$$
 for all $h\in\h$. In this case both  $\{h_i\}_{i\in I}$ and $\{k_i\}_{i\in I}$ are frames for $\h$. For more detail, refer to \cite{ole}.

%Fusion frame was introduced by Casazza and Kutyniok in \cite{cas}.
%It is an additive construct of several, potentially "overlapping" frames.
%Its definition is as follows:
\subsection{Fusion frames in Hilbert spaces}
\begin{defn}
Let $\h$ be a  Hilbert space and $I$ be a (finite or infinite) countable index set. Assume that $\{W_i\}_{i\in
I}$ is a sequence of closed subspaces of $\h$ and $\{v_i\}_{i\in I}$ is a family
of weights, i.e., $v_i>0$ for all $i\in I$. We say that the
family $\mathcal{W}=\{(W_i,v_i)\}_{i\in I}$ is a  $fusion$ $frame$ or a $frame$ $of$ $subspaces$ with
respect to $\{v_i\}_{i\in I}$ for $\h$ if there
exist constants $0<A\leq B<\infty$ such that
$$A\|x\|^2\leq\sum_{i\in I}v_i^2\|\pi_{W_i}(x)\|^2\leq
B\|x\|^2\quad\forall x\in\h,$$ where $\pi_{W_i}$ denotes the orthogonal
projection onto $W_i,$ for each $i\in I.$ The fusion frame
$\mathcal{W}=\{(W_i,v_i)\}_{i\in I}$ is called  $tight$  if
$A=B$ and $Parseval$ if $A=B=1$.  If all ${v_i}^{,}s$ take the same value $v$, then $\mathcal{W}$ is
called $v$-$uniform$.
Moreover, $\mathcal{W}$ is called an $orthonormal$ $fusion$
 $basis$ for $\h$ if  $\h= \bigoplus_{i\in I} W_i$.
 If $\mathcal{W}=\{(W_i,v_i)\}_{i\in I}$ possesses an
 upper fusion frame bound but not necessarily a lower
 bound, we call it a $Bessel$ $fusion$ $sequence$ with Bessel fusion bound $B$.
 The normalized version of $\mathcal{W}$ is obtained when we choose $v_i=1$ for all $i\in I.$
 Note that, we use this term merely when $\{(W_i,1)\}_{i\in I}$ formes a fusion frame for $\h.$
\end{defn}

We consider the Hilbert space
$$\oplus_{i\in I}W_i=\big\{\{h_i\}_{i\in I}:\hspace{.2cm}h_i\in W_i ,\quad\sum_{i\in I}\|h_i\|^2<+\infty\big\}$$
with inner product $\langle\{h_i\}_{i\in I},\{k_i\}_{i\in I}\rangle=\sum_{i\in I}\langle h_i,k_i\rangle$.
We associate to a Bessel fusion sequence $(W,v)$ the following bounded operators
$$T_{W,v}:\oplus_{i\in I}W_i\to\h,\hspace{.5cm}T_{W,v}\{h_i\}_{i\in I}=\sum_{i\in I}v_ih_i$$
 and
$$T^*_{W,v}:\h\to\oplus_{i\in I}W_i,\hspace{.5cm}T^*_{W,v}h=\{v_i\pi_{W_i}(h)\}_{i\in I}.$$ The operator $T_{W,v}$ called the analysis operator and $T^*_{W,v}$ the synthesis operator.
The operator $S_{W,v}=T_{W,v}T^*_{W,v}$ called the fusion frame operator, it is a  bounded, positive and invertible operator and $A\leq S_{W,v}\leq B.$
\par
In \cite{gav}, G\u{a}vru\c{t}a gave results on the duality of fusion frame in Hilbert spaces.
Heineken and et.al. studied a new concept of dual fusion frame,
entitled, Q-dual of fusion frame in \cite{hei}.
Their definition  as follows:
Assume that $(V,v)$ and $(W,w)$ are fusion frames for $\h$.
If there exists $Q\in B(\mathcal{V},\mathcal{W})$ such that $T_{W,w}QT_{V,v}^*=I_{\h},$
then $(W,w)$ is called Q-dual fusion frame of $(V,v)$.
\hspace{0.5cm}
\subsection{Continuous frames in Hilbert spaces}

\begin{defn}
Let $\h$ be a complex Hilbert space and
$(X,\mu)$ a  measure space with positive measure $\mu$.
A mapping $F:X\rightarrow\h$ is called a
 continuous frame for $\h$ with respect to $(X,\mu)$,
if
\begin{enumerate}
\item for all $h\in\h$, $x\mapsto\langle h,F(x)\rangle$ is a
measurable function on $X$.
\item there exist constants $0<A\leq  B<+\infty$ such that
\begin{equation*}
A\|h\|^2\leq\int_X|\langle h,F(x)\rangle|^2 d\mu(x)\leq B\|h\|^2,\hspace{.5cm}h\in\h;
\end{equation*}
\end{enumerate}
\end{defn}
This mapping is called Bessel if the second inequality holds.
In this case, $B$ is called Bessel constant.
If $F:X\rightarrow\h$ is a Bessel mapping and $\phi\in L^2(X,\mu)$,
then the operators
$$T_F:L^2(X,\mu)\to\h,\hspace{.5cm}T_F(\phi)=\int_X\phi(x)F(x)d\mu(x),\quad \phi\in L^2(X,\mu)$$
$$T_F^*:\h\to L^2(X,\mu),\hspace{.5cm}(T_F^*h)(x)=\langle h,F(x)\rangle,\quad h\in\h, x\in X$$
are well-defined and bounded.
The operator $T_F$ is called the synthesis operator and $T_F^*$ the analysis operator of $F$. The operator $S_F=T_F T^*_F$ is called the continuous frame operator.
Like for discrete frames, this operator is bounded, positive and invertible operator.
It can be shown that for a given continuous frame $F$, the mapping $S_F^{-1}F$ is also a fusion frame, \cite{rahimi-najati}.
It is called canonical dual fusion frame.
Gabardo and Han in \cite{gab} defined  dual frame for a continuous frame as follows:
Let $F$ and $G$ be  continuous frames with respect to $(X,\mu)$ for $\h$.
 The pair $(F,G)$ is called a dual pair,  if $T_GT_F^*=I_{\h}$.

Continuous frames were developed by several authors in different aspects.
One of these  aspects is c-fusion frame, \cite{far2}.
In section 2, we will remind c-fusion frame definition
and in section 3, we state the concept of Q-duality of c-fusion frames in Hilbert spaces.
%%%%%%%%%%%%%%%%%%%%%%%%%%%%%%%%%%%%%%%%
\section{c-fusion frames}
Let $\mathbb{H}$ be the collection of all closed subspaces of $\h$.
Also, let $F:X\to \mathbb{H}$ and
$L^2(X,\mathcal{H},F)$ be the class of all measurable mappings $f:X\to\h$
such that for each $x\in X$, $f(x)\in F(x)$ and $\int_X\|f(x)\|^2d\mu(x)<\infty$.
It can be verified that $L^2(X,\mathcal{H},F)$ is a Hilbert space with the inner product defined by
$$\langle f,g\rangle=\int_X\langle f(x),g(x)\rangle d\mu(x)$$ for $f,g\in L^2(X,\mathcal{H},F)$.
For brevity, we will denote $L^2(X,\mathcal{H},F)$ by $L^2(X,F)$.
\begin{defn}
Let $F:X\to\mathbb{H}$ be a mapping such that for each $f\in\h$,
the mapping $x\mapsto\pi_{F(x)}(h)$ be measurable.
We say that $(F,v)$ is a c-fusion frame for $\h$
if there exist constants $0<A\leq B<+\infty$ such that
$$A\|h\|^2\leq\int_Xv^2(x)\|\pi_{F(x)}(h)\|^2d\mu(x)\leq B\|h\|^2,\quad h\in\h$$
\end{defn}
\hspace{-.45cm}where $\pi_{F(x)}$ denotes the orthogonal projection of $\h$ on $F(x)$.
It is called a tight c-fusion frame for $\h$ if $A=B$ and Parseval c-fusion frame if $A=B=1$.
If we have only  the upper bound, we call $(F,v)$  a c-Bessel mapping for $\h$.
If $(F,v)$ is a c-Bessel mapping for $\h$,
then we define the synthesis operator
$$T_{F,v}:L^2(X,F)\to\h,\hspace{.5cm}T_{F,v}f=\int_Xv(x)f(x)d\mu(x),  $$ for $f\in L^2(X,F)$.
This operator is a bounded linear operator. Its adjoint
$$T^*_{F,v}:\h\to L^2(X,F),\hspace{.5cm}T^*_{F,v}h=v\pi_F(h),\quad h\in\h$$
is called the analysis operator. The operator $S_{F,v}=T_{F,v}T^*_{F,v}$ is called c-fusion frame operator, it
is positive and self-adjoint operator.
It is easy to verify $$S_{F,v}h=\int_Xv^2(x)\pi_{F(x)}(h)d\mu(x), \quad h\in\h.$$
Furthermore, if $(F,v)$ is a c-fusion frame,
then $S_{F,v}$ is an invertible operator on $\h$
and $$h=\int_Xv^2(x)S_{F,v}^{-1}\pi_{F(x)}(h)d\mu(x),\hspace{.5cm}h\in\h.$$
For more details, refer to  \cite{far}.\par
%________________________________________________________________________
\section{Q-Duality of c-fusion frames}
Duality of frames is one of the most important aspects of studies on frame theory, not only in theory but in practice \cite{Arefi.4,Fereydooni,Janssen,Laugesen,wave}.
Recall that for two Bessel sequences $\mathcal{F}=\{f_i\}  $ and   $\mathcal{G}=\{g_i\}  $ with synthesis operators $T$ and $U$, ( resp.), the pair $(\mathcal{F},\mathcal{G}   )$ is a dual pair if $TU^*=I$. When we wish to use this definition for c-fusion frames, a problem occurs:

Let $(F,v)$ and $(G,w)$ be two c-fusion frames in $\h$.
Since  $R(T_{F,v}^*)\subseteq L^2(X,F)$ and $D(T_{G,w})=L^2(X,G)$ ( where $R(T_{F,v}^*)$ and $D(T_{G,w})$ denote the range $T_{F,v}^*$ and domain of $T_{G,w}$),
then $T_{G,w}T_{F,v}^*$ is generally not well defined. For overcome this problem, we define a new concept namely Q-dual c-fusion frame.
\begin{defn}
Assume that $(F,v)$ and $(G,w)$ are c-fusion frames for $\h$
with synthesis operators $T_{F,v}$ and $T_{G,w}$, respectively.
If there exists $Q\in \mathcal{B}(L^2(X,F),L^2(X,G))$ such that
$$T_{G,w}QT^*_{F,v}=I_{\h},$$
then $(G,w)$ is called a Q-dual c-fusion frame of $(F,v)$.
\end{defn}
\begin{exa}
We attend to the Hilbert space $\mathbb{R}^2$
with standard base $\{e_1,e_2\}$.
The set $$B_{\mathbb{R}^2}=\{x\in\mathbb{R}^2:~~\|x\|\leq 1\}$$
equipped with Lebesgue measure $\lambda$ forms a measure space.
Suppose $B_1$ and $B_2$ be a partition of $B_{\mathbb{R}^2}$ where $\lambda(B_2)\geq\lambda(B_1)>1$.
For the Hilbert space $\h=\mathbb{R}^2$,
we put $\mathbb{H}=\{W_1,W_2\}$ which $W_1=span\{e_1\}$ and $W_2=span\{e_2\}$.
Define $$F:B_{\mathbb{R}^2}\to\mathbb{H}$$
\[
F(x)=\left\{\begin{array}{ll}
W_1,\hspace{.5cm}x\in B_1,\cr
W_2,\hspace{.5cm}x\in B_2,\cr
\end{array}\right.
\]
and $$v:B_{\mathbb{R}^2}\to[0,+\infty)$$
\[
v(x)=\left\{\begin{array}{ll}
\frac{1}{\sqrt{\lambda(B_1)}},\hspace{.5cm}x\in B_1,\cr
\frac{1}{\sqrt{\lambda(B_2)}},\hspace{.5cm}x\in B_2.\cr
\end{array}\right.
\]
Thus $F$ is weakly measurable and $(F,v)$ is a Parseval c-fusion frame for $\mathbb{R}^2$.
If we define $$G:B_{\mathbb{R}^2}\to\mathbb{H}$$
\[
G(x)=\left\{\begin{array}{ll}
W_2,\hspace{1cm}x\in B_1,\cr
W_1,\hspace{1cm}x\in B_2,\cr
\end{array}\right.
\]
then $(G,v)$ is a Parseval c-fusion frame for $\mathbb{R}^2$.

Now, let $f\in L^2(B_{\mathbb{R}^2},F)$ be arbitrary.
Since for each $x\in X$, $f(x)\in F(x)$, then there exists $\alpha_{f,x},\beta_{f,x}\in\mathbb{R}$
such that
\[
f(x)=\left\{\begin{array}{ll}
\alpha_{f,x}e_1,\hspace{.9cm}x\in B_1,\cr
\beta_{f,x}e_2,\hspace{.95cm}x\in B_2.\cr
\end{array}\right.
\]
Define $Q:L^2(B_{\mathbb{R}^2},F)\to L^2(B_{\mathbb{R}^2},G)$ by
\[
(Qf)(x)=\left\{\begin{array}{ll}
\beta_{f,x}e_2,\hspace{.9cm}x\in B_1,\cr
\alpha_{f,x}e_1,\hspace{.95cm}x\in B_2.\cr
\end{array}\right.
\]
It is easy to verify that $Q\in\mathcal{B}(L^2(B_{\mathbb{R}^2},F),L^2(B_{\mathbb{R}^2},G)).$
Suppose $y=ce_1+de_2\in\mathbb{R}^2$ be arbitrary.
We have
\begin{align*}
T_{G,v}QT_{F,v}^*(y)
& = \int_{B_{\mathbb{R}^2}}v(x)(Qv\pi_Fy)(x)d\lambda(x)\\
& = \int_{B_1}\frac{1}{\lambda(B_1)}de_2d\lambda(x)+\int_{B_2}\frac{1}{\lambda(B_2)}ce_1d\lambda(x)\\
& = de_2+ce_1\\
& = y.
\end{align*}
i.e., $(G,v)$ is a $Q$- dual of $(F,v)$.
\end{exa}
%Note the following proposition;
%This proposition is required to state the theorem (3.4).
The following theorem characterizes all c-fusion frames in term of a bounded operator $Q$.
\begin{thm}
Let $(F,v)$ be a c-Bessel mapping for $\h$
with synthesis operator $T_{F,v}$.
Then $(F,v)$ is a c-fusion frame for $\h$ if and only if
there exists a c-Bessel mapping $(G,w)$ and a bounded operator $Q\in\mathcal{B}(L^2(X,F),L^2(X,G))$ such that $T_{G,w}QT^*_{F,v}=I_{\h}$.
\end{thm}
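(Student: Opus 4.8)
The plan is to prove the two implications separately. The reverse implication (existence of $Q$ forces the lower bound) is the quick one, so I would dispatch it first; the forward implication needs a slightly clever choice of the operator $Q$.

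\textbf{Sufficiency.} Suppose $(G,w)$ is a c-Bessel mapping with Bessel bound $B$ and $Q\in\mathcal{B}(L^2(X,F),L^2(X,G))$ satisfies $T_{G,w}QT^*_{F,v}=I_{\h}$. For $h\in\h$ I would expand, using the definition of the adjoint and Cauchy--Schwarz,
$$\|h\|^2=\langle T_{G,w}QT^*_{F,v}h,h\rangle=\langle QT^*_{F,v}h,T^*_{G,w}h\rangle\leq\|Q\|\,\|T^*_{F,v}h\|\,\|T^*_{G,w}h\|.$$
Since $(G,w)$ is c-Bessel, $\|T^*_{G,w}h\|^2=\int_Xw^2(x)\|\pi_{G(x)}(h)\|^2d\mu(x)\leq B\|h\|^2$, so cancelling one factor $\|h\|$ yields $\|h\|^2\leq\|Q\|^2B\,\|T^*_{F,v}h\|^2=\|Q\|^2B\int_Xv^2(x)\|\pi_{F(x)}(h)\|^2d\mu(x)$. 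Combined with the c-Bessel (upper) bound that $(F,v)$ already has by hypothesis, this exhibits $(F,v)$ as a c-fusion frame with lower bound $(\|Q\|^2B)^{-1}$.

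\textbf{Necessity.} Now assume $(F,v)$ is a c-fusion frame, so that $S_{F,v}=T_{F,v}T^*_{F,v}$ is bounded, positive and invertible on $\h$ (Section~2). I would simply take $(G,w):=(F,v)$, which is trivially a c-Bessel mapping with $L^2(X,G)=L^2(X,F)$ and $T_{G,w}=T_{F,v}$, and set
$$Q:=T^*_{F,v}S_{F,v}^{-2}T_{F,v}\in\mathcal{B}\big(L^2(X,F)\big),$$
which is bounded because $S_{F,v}^{-2}$ is. Then a direct computation gives
$$T_{F,v}QT^*_{F,v}=\big(T_{F,v}T^*_{F,v}\big)S_{F,v}^{-2}\big(T_{F,v}T^*_{F,v}\big)=S_{F,v}S_{F,v}^{-2}S_{F,v}=I_{\h},$$
which is exactly the required identity.

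The step I expect to be the real obstacle --- or at least the only non-mechanical one --- is pinning down $Q$ in the necessity part. The obvious guesses fail: $Q=\mathrm{Id}$ gives $T_{F,v}T^*_{F,v}=S_{F,v}\neq I_{\h}$ in general, and operators built directly from $S_{F,v}^{-1}$ act on $\h$ rather than on $L^2(X,F)$, so they do not even typecheck as a $Q$. The point is that $T_{F,v}$ is surjective and $T^*_{F,v}$ is bounded below, so one can absorb $S_{F,v}^{-1}$ twice --- conjugating it by $T^*_{F,v}$ on one side and by $T_{F,v}$ on the other --- to manufacture an honest bounded operator between the two $L^2$-spaces whose sandwich collapses to $I_{\h}$. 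Once $Q$ is in hand, the rest is routine bookkeeping with the operators $T_{F,v}$, $T^*_{F,v}$, $S_{F,v}$ recalled in Section~2.
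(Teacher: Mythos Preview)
Your proof is correct. The sufficiency half is identical to the paper's. For necessity the paper also sets $(G,w)=(F,v)$ but simply declares $Q=S_{F,v}^{-1}$, appealing to the reconstruction formula $h=\int_X v(x)\,S_{F,v}^{-1}\big(v(x)\pi_{F(x)}h\big)\,d\mu(x)$; as you yourself flag, this choice does not literally typecheck, since $S_{F,v}^{-1}$ is an operator on $\h$ rather than on $L^2(X,F)$, and $S_{F,v}^{-1}f(x)$ need not lie in $F(x)$. Your $Q=T^*_{F,v}S_{F,v}^{-2}T_{F,v}$ fixes this: it is honestly an element of $\mathcal{B}(L^2(X,F))$, and the computation $T_{F,v}QT^*_{F,v}=S_{F,v}S_{F,v}^{-2}S_{F,v}=I_\h$ is immediate. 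The paper's version has the virtue of keeping the canonical-dual reconstruction formula visibly in the foreground; yours is one step removed from that intuition but is the formally rigorous statement.
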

\begin{proof}
At the first, suppose $(F,v)$ is a c-fusion frame for $\h$.
For all $h\in\h$
$$h = \int_Xv^2(x)S_{F,v}^{-1}\pi_{F(x)}(h)d\mu(x)
 = \int_Xv(x)S_{F,v}^{-1}v(x)\pi_{F(x)}(h)d\mu(x).$$
Put $G=F$, $w=v$ and $Q=S_{F,v}^{-1}$.
Conversely, suppose that $B_{F,v}$ and $B_{G,w}$ are Bessel constants of c-Bessel mappings $(F,v)$ and $(G,w)$, respectively.
Since $T_{G,w}QT^*_{F,v}=I_{\h}$,
then for each $h\in\h$, we have
$$\langle h,h\rangle = \langle T_{G,w}QT^*_{F,v}(h),h\rangle=\langle QT^*_{F,v}(h),T_{G,w}^*(h)\rangle$$
consequently
\begin{align*}
\|h\|^2
& = |\langle  QT^*_{F,v}(h),T_{G,w}^*(h)\rangle|\\
& \leq \|QT^*_{F,v}(h)\| \|T_{G,w}^*(h)\|\\
& \leq \|Q\| \|T^*_{F,v}(h)\| \|T_{G,w}^*(h)\|\\
& \leq \|Q\|\left(\int_{X}v^2(x)\|\pi_{F(x)}(h)\|^2d\mu(x)\right)^{\frac{1}{2}}\left(\int_{X}w^2(x)\|\pi_{G(x)}(h)\|^2d\mu(x)\right)^{\frac{1}{2}}\\
& \leq \|Q\|\sqrt{B_{G,w}}\|h\|\left(\int_{X}v^2(x)\|\pi_{F(x)}(h)\|^2d\mu(x)\right)^{\frac{1}{2}}.\\
\end{align*}
Thus, for any $h\in\h,$
 $$B_{G,w}^{-1}\|Q\|^{-2}\|h\|^2\leq\int_Xv^2(x)\|\pi_{F(x)}(h)\|^2d\mu(x)$$
i.e., $(F,v)$ is a c-fusion frame for $\h$.
Similarly, $(G,w)$ is a c-fusion frame for $\h$ with lower bound $B_{F,v}^{-1}\|Q\|^{-2}$.
\end{proof}
The following theorem collects some properties of
Q-dual of c-fusion frames that are analogous to corresponding ones for dual frames with similar proofs, \cite{ole}.
\begin{thm}
Let $(F,v)$ and $(G,w)$ be c-Bessel mappings for $\h$
with synthesis operators $T_{F,v}$ and $T_{G,w}$, respectively.
Also, let $Q\in\mathcal{B}(L^2(X,F),L^2(X,G))$.
Then the following conditions are equivalent:
\begin{enumerate}
\item $T_{G,w}QT^*_{F,v}=I_{\h}$;
\item $T_{F,v}Q^*T^*_{G,w}=I_{\h}$;
\item $T^*_{F,v}$ is injective, $T_{G,w}Q$ is surjective and
$$(T^*_{F,v}T_{G,w}Q)^2=T^*_{F,v}T_{G,w}Q;$$
\item $T^*_{G,w}$ is injective, $T_{F,v}Q^*$ is surjective and
$$(T^*_{G,w}T_{F,v}Q^*)^2=T^*_{G,w}T_{F,v}Q^*;$$
\item For all $h,k\in\h$, $\langle h,k\rangle=\langle QT^*_{F,v}(h),T^*_{G,w}(k)\rangle=\langle Q^*T^*_{G,w}(h),T^*_{F,v}(k)\rangle.$
\end{enumerate}
In case any of these equivalent conditions are satisfied,
$(F,v)$ and $(G,w)$ are c-fusion frames,
$(G,w)$ is a Q-dual for $(F,v)$
and $(F,v)$ is a $Q^*$-dual for $(G,w)$.
\end{thm}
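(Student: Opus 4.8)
The plan is to prove the cycle of implications $(1)\Rightarrow(2)\Rightarrow(5)\Rightarrow(1)$, together with $(1)\Rightarrow(3)$, $(3)\Rightarrow(1)$, and the symmetric pair $(2)\Leftrightarrow(4)$, exploiting the formal symmetry between the roles of $(F,v)$ and $(G,w)$ under the exchange $Q\leftrightarrow Q^*$. The starting observation is that $T_{G,w}QT^*_{F,v}=I_{\h}$ is an identity between bounded operators on $\h$, so taking adjoints immediately gives $(T_{G,w}QT^*_{F,v})^*=T_{F,v}Q^*T^*_{G,w}=I_{\h}^*=I_{\h}$; this is $(1)\Leftrightarrow(2)$ with essentially no work, using that $I_\h$ is self-adjoint and $(T^*)^*=T$ for the bounded synthesis/analysis operators introduced in Section~2.

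For $(1)\Leftrightarrow(5)$, I would simply rewrite the operator identity weakly: $\langle h,k\rangle=\langle T_{G,w}QT^*_{F,v}h,k\rangle=\langle QT^*_{F,v}h,T^*_{G,w}k\rangle$ for all $h,k\in\h$, which is the first equality in $(5)$; the second equality in $(5)$ is the same manipulation applied to $(2)$, namely $\langle h,k\rangle=\langle T_{F,v}Q^*T^*_{G,w}h,k\rangle=\langle Q^*T^*_{G,w}h,T^*_{F,v}k\rangle$. Conversely, if the weak identities of $(5)$ hold for all $h,k$, then $\langle T_{G,w}QT^*_{F,v}h-h,k\rangle=0$ for all $k\in\h$, forcing $T_{G,w}QT^*_{F,v}h=h$, i.e.\ $(1)$. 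So $(1),(2),(5)$ are all equivalent by routine adjoint bookkeeping.

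For $(1)\Rightarrow(3)$: from $T_{G,w}QT^*_{F,v}=I_{\h}$ we read off that $T^*_{F,v}$ has a bounded left inverse (namely $T_{G,w}Q$), hence is injective, and that $T_{G,w}Q$ has a bounded right inverse (namely $T^*_{F,v}$), hence is surjective. For the idempotency, set $P:=T^*_{F,v}T_{G,w}Q$, a bounded operator on $L^2(X,F)$; then $P^2=T^*_{F,v}(T_{G,w}QT^*_{F,v})T_{G,w}Q=T^*_{F,v}\,I_{\h}\,T_{G,w}Q=P$. For the converse $(3)\Rightarrow(1)$: with $P=T^*_{F,v}T_{G,w}Q$ idempotent, note that $P$ restricted to the range of $T^*_{F,v}$ acts as the identity, because for $g=T^*_{F,v}h$ we have $Pg=T^*_{F,v}(T_{G,w}QT^*_{F,v}h)$, and applying $T^*_{F,v}$ (injective) after $P(Pg)=Pg$ one deduces $T^*_{F,v}(T_{G,w}QT^*_{F,v}h-h)\in\ker$ considerations — more cleanly: $P^2=P$ gives $T^*_{F,v}\big(T_{G,w}QT^*_{F,v}-I_\h\big)T_{G,w}Q=0$, and since $T^*_{F,v}$ is injective, $\big(T_{G,w}QT^*_{F,v}-I_\h\big)T_{G,w}Q=0$, i.e.\ $T_{G,w}QT^*_{F,v}$ is the identity on the range of $T_{G,w}Q$; as $T_{G,w}Q$ is surjective onto $\h$, this range is all of $\h$, giving $(1)$. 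The equivalence $(2)\Leftrightarrow(4)$ is identical after swapping $(F,v)\leftrightarrow(G,w)$ and $Q\leftrightarrow Q^*$, so no new argument is needed.

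Finally, once any of these conditions holds we have $T_{G,w}QT^*_{F,v}=I_{\h}$ and $T_{F,v}Q^*T^*_{G,w}=I_{\h}$ with $Q$ bounded, so Theorem~3.3 (the characterization theorem proved just above) applies in both directions: taking the c-Bessel mapping in that theorem to be $(G,w)$ shows $(F,v)$ is a c-fusion frame, and taking it to be $(F,v)$ (with $Q^*$ in place of $Q$) shows $(G,w)$ is a c-fusion frame; by Definition~3.2 these identities say precisely that $(G,w)$ is a $Q$-dual of $(F,v)$ and $(F,v)$ is a $Q^*$-dual of $(G,w)$. The only place any care is required is the direction $(3)\Rightarrow(1)$ (and its twin $(4)\Rightarrow(2)$): one must resist the temptation to ``cancel'' $T^*_{F,v}$ and $T_{G,w}Q$ naively and instead use injectivity of $T^*_{F,v}$ on the left and surjectivity of $T_{G,w}Q$ on the right in the correct order, exactly as in the classical discrete-frame proof in \cite{ole}; everything else is formal.
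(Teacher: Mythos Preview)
Your proof is correct and is precisely the classical adjoint/idempotent argument from \cite{ole} that the paper invokes; the paper itself gives no details beyond pointing to that reference, so your write-up is exactly what the authors had in mind. In particular your handling of $(3)\Rightarrow(1)$ via injectivity on the left and surjectivity on the right is the standard step, and your appeal to Theorem~3.3 for the final c-fusion frame conclusion matches the paper's logical structure.
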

The following theorem expresses the uniqueness of the operator $ Q $.
\begin{thm}\label{3.5}
Let $(F,v)$ and $(G,w)$ be two c-Bessel mappings for $\h$
with synthesis operators $T_{F,v}$ and $T_{G,w}$, respectively.
Also, let $T_{F,v}^*$ and $T_{G,w}^*$ be surjective operators.
If there exists $Q\in\mathcal{B}(L^2(X,F),L^2(X,G))$
such that $(G,w)$ is a Q-dual of $(F,v)$,
then the operator $Q$ is unique.
\end{thm}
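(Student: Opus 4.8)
The plan is a straightforward uniqueness argument: cancel $T_{F,v}^*$ on the right and $T_{G,w}$ on the left of the defining identity $T_{G,w}QT_{F,v}^*=I_{\h}$, using the two surjectivity hypotheses respectively. First I would suppose that $Q_1,Q_2\in\mathcal{B}(L^2(X,F),L^2(X,G))$ both witness that $(G,w)$ is a $Q$-dual of $(F,v)$, so $T_{G,w}Q_1T_{F,v}^*=I_{\h}=T_{G,w}Q_2T_{F,v}^*$. Subtracting yields $T_{G,w}(Q_1-Q_2)T_{F,v}^*=0$ as an operator on $\h$.

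Next I would use that $T_{F,v}^*:\h\to L^2(X,F)$ is surjective: every $f\in L^2(X,F)$ is of the form $f=T_{F,v}^*h$ for some $h\in\h$, so $T_{G,w}(Q_1-Q_2)f=T_{G,w}(Q_1-Q_2)T_{F,v}^*h=0$ for all $f\in L^2(X,F)$, i.e.\ $T_{G,w}(Q_1-Q_2)=0$ as an operator $L^2(X,F)\to\h$. Then I would invoke that $T_{G,w}^*:\h\to L^2(X,G)$ is surjective: since $\overline{R(T_{G,w}^*)}=(\ker T_{G,w})^{\perp}$, surjectivity of $T_{G,w}^*$ forces $\ker T_{G,w}=\{0\}$, so $T_{G,w}$ is injective. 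Applying injectivity of $T_{G,w}$ to $T_{G,w}(Q_1-Q_2)f=0$ gives $(Q_1-Q_2)f=0$ for every $f\in L^2(X,F)$, hence $Q_1=Q_2$.

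I do not expect a genuine obstacle here; the two points deserving a little care are (i) exploiting the actual surjectivity of $T_{F,v}^*$ (rather than merely dense range) so the right factor is removed without any limiting argument, and (ii) correctly reading surjectivity of $T_{G,w}^*$ as injectivity of $T_{G,w}$ via the standard kernel--range duality $\overline{R(A^*)}=(\ker A)^{\perp}$. I would also remark that density of $R(T_{F,v}^*)$ together with boundedness of $Q_1-Q_2$ would already suffice for step (i), but the stated hypotheses make the conclusion immediate.
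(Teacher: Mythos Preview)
Your proof is correct and follows essentially the same approach as the paper: both arguments use surjectivity of $T_{F,v}^*$ to cancel on the right and then use surjectivity of $T_{G,w}^*$ (equivalently, injectivity of $T_{G,w}$ via the kernel--range duality) to cancel on the left. The only cosmetic difference is that the paper phrases the second step contrapositively---producing a nonzero element orthogonal to $R(T_{G,w}^*)$ and concluding that $T_{G,w}^*$ could not have been surjective---whereas you argue directly; the substance is the same.
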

\begin{proof}
Assume $(G,w)$ is $Q_1$-dual and $Q_2$-dual of $(F,v)$.
Consider $Q=Q_1-Q_2\neq0$.
Then there exists $0\neq f\in L^2(X,F)$ such that $Qf\neq0$.
On the other hand, there exists $h_0\in\h$ such that $T_{F,v}^*h_0=f$,
because $T_{F,v}^*$ is surjective.
Hence $(QT_{F,v}^*)(h_0)\neq0$.
Furthermore $(T_{G,w}QT_{F,v}^*)(h_0)=0$.
Now, we have
$$\langle(T_{G,w}QT_{F,v}^*)(h_0),h\rangle=0,\hspace{.5cm}h\in\h.$$
Thus $0\neq(QT_{F,v}^*)(h_0)\perp range(T_{G,w}^*)$,
which implies that $T_{G,w}^*$ is not surjective.
\end{proof}
Now we want to find a necessary condition on $Q$ such that $(G,v)$ is a Q-dual of $(F,v)$.
We require the following the lemma.
\begin{lem}\cite{naj}\label{3.6}
Let $(F,v)$ be a c-fusion frame for $\h$ with bounds $A_{F,v}$ and $B_{F,v}$.
Then
$$A_{F,v}\dim\h\leq\int_Xv^2(x)\dim F(x)d\mu(x)\leq B_{F,v}\dim\h.$$
Furthermore, if $dim\h<+\infty$, then
$$A_{F,v}\leq\int_Xv^2(x)d\mu(x)\leq B_{F,v}\dim\h.$$
\end{lem}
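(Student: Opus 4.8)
The plan is to test the c-fusion-frame inequality on an orthonormal basis of $\h$, add up the resulting scalar estimates, and then move the sum inside the integral over $X$. First I would fix an orthonormal basis $\{e_j\}_{j\in J}$ of $\h$, so that $|J|=\dim\h$. Applying the defining inequalities with $h=e_j$ gives $A_{F,v}\le\int_X v^2(x)\|\pi_{F(x)}(e_j)\|^2\,d\mu(x)\le B_{F,v}$ for every $j\in J$, and summing over $j$ (a sum of nonnegative numbers, hence well defined in $[0,+\infty]$) yields
$$A_{F,v}\dim\h\;\le\;\sum_{j\in J}\int_X v^2(x)\|\pi_{F(x)}(e_j)\|^2\,d\mu(x)\;\le\;B_{F,v}\dim\h.$$

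The heart of the matter is to recognise the middle quantity as $\int_X v^2(x)\dim F(x)\,d\mu(x)$. Two observations do this. First, every integrand $v^2(x)\|\pi_{F(x)}(e_j)\|^2$ is nonnegative and measurable --- measurability of $x\mapsto\pi_{F(x)}(e_j)$ being built into the standing hypotheses on $F$ --- so Tonelli's theorem permits interchanging $\sum_{j\in J}$ with $\int_X$. Second, for any closed subspace $W\subseteq\h$ with orthonormal basis $\{g_k\}_k$ one has the pointwise identity
$$\sum_{j\in J}\|\pi_W(e_j)\|^2=\sum_{j\in J}\sum_k|\langle e_j,g_k\rangle|^2=\sum_k\sum_{j\in J}|\langle e_j,g_k\rangle|^2=\sum_k\|g_k\|^2=\dim W,$$
the inner interchange again justified by nonnegativity and the penultimate equality being Parseval's identity for $\{e_j\}_{j\in J}$. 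Taking $W=F(x)$ inside the integral produced by Tonelli gives the first asserted chain of inequalities (and incidentally exhibits $x\mapsto v^2(x)\dim F(x)$ as a sum of measurable functions, so the integral in the statement makes sense). In finite dimensions this is just the trace estimate $A_{F,v}n\le\operatorname{tr}(S_{F,v})\le B_{F,v}n$ together with $\operatorname{tr}(\pi_{F(x)})=\dim F(x)$; the orthonormal-basis computation is the general substitute.

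For the ``furthermore'' part let $\dim\h=n<\infty$. The left inequality uses $\dim F(x)\le n$ for all $x$, so $\int_X v^2(x)\dim F(x)\,d\mu(x)\le n\int_X v^2(x)\,d\mu(x)$; combining with $A_{F,v}n\le\int_X v^2(x)\dim F(x)\,d\mu(x)$ and dividing by $n$ gives $A_{F,v}\le\int_X v^2(x)\,d\mu(x)$. The right inequality uses $\dim F(x)\ge1$ --- which we may assume after discarding the set $\{x:F(x)=\{0\}\}$, as those subspaces contribute nothing to the c-fusion frame operator --- whence $\int_X v^2(x)\,d\mu(x)\le\int_X v^2(x)\dim F(x)\,d\mu(x)\le B_{F,v}n$. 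The only point needing care is the legitimacy of the two summation--integration interchanges; both follow from Tonelli's theorem since everything in sight is nonnegative, which is also why no separability of $\h$ is required --- an uncountable index set $J$ being handled by reading all sums as suprema of finite partial sums. Beyond this bookkeeping I anticipate no genuine obstacle.
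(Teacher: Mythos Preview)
The paper does not supply its own proof of this lemma: it is quoted verbatim from \cite{naj} and used as a black box in the subsequent theorem. So there is no in-paper argument to compare against.

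Your proof is the standard one and is correct in substance. Summing the frame inequality over an orthonormal basis, interchanging sum and integral by Tonelli, and using $\sum_j\|\pi_W e_j\|^2=\dim W$ (the Hilbert--Schmidt norm of a projection) is exactly how such trace-type estimates are established, and your finite-dimensional remark $A_{F,v}n\le\operatorname{tr}(S_{F,v})\le B_{F,v}n$ is the right way to see what is going on. One small caveat in the ``furthermore'' part: discarding $\{x:F(x)=\{0\}\}$ does not change the c-fusion frame operator, but it \emph{does} change $\int_X v^2\,d\mu$, which is the very quantity being bounded. The upper inequality $\int_X v^2\,d\mu\le B_{F,v}\dim\h$ therefore tacitly requires $\dim F(x)\ge 1$ a.e.\ (or some equivalent convention); this is an implicit hypothesis in the cited result rather than a flaw in your argument, but you should flag it rather than sweep it under the rug.
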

%\begin{proof}
%For to see detail, refer to \cite{naj}.
%\end{proof}
\begin{thm}
Let $\dim\h<+\infty$.
Let $(F,v)$ and $(G,v)$ be c-Bessel mappings for $\h$ with upper bounds $B_{F,v}$ and $B_{G,v}$, respectively.
Also, let $Q\in\mathcal{B}(L^2(X,F),L^2(X,G))$.
If $(G,v)$ is a Q-dual of $(F,v)$, then
$$\|Q\| \geq \frac{(dim\h)^{-1}}{\sqrt{B_{F,v},B_{G,v}}}.$$
\end{thm}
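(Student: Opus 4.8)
The plan is to convert the Q-duality identity $T_{G,v}QT^*_{F,v}=I_{\h}$ into an explicit lower c-fusion frame bound for $(F,v)$, and then feed that bound into Lemma~\ref{3.6}. Since $(G,v)$ is assumed to be a Q-dual of $(F,v)$, the converse part of the characterization theorem of this section applies; running its Cauchy--Schwarz estimate, for every $h\in\h$,
\[
\|h\|^{2}=\bigl|\langle QT^*_{F,v}h,\,T^*_{G,v}h\rangle\bigr|\le\|Q\|\,\sqrt{B_{G,v}}\,\|h\|\left(\int_X v^2(x)\,\|\pi_{F(x)}(h)\|^2\,d\mu(x)\right)^{1/2},
\]
so $(F,v)$ is a c-fusion frame whose lower bound can be taken to be $A_{F,v}:=B_{G,v}^{-1}\|Q\|^{-2}$, the upper bound being the given Bessel bound $B_{F,v}$.

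Next, because $\dim\h<+\infty$, I would apply the second chain of inequalities in Lemma~\ref{3.6} to the c-fusion frame $(F,v)$ with this pair of bounds:
\[
B_{G,v}^{-1}\|Q\|^{-2}=A_{F,v}\le\int_X v^2(x)\,d\mu(x)\le B_{F,v}\dim\h.
\]
Discarding the middle term and rearranging gives $\|Q\|^{2}\ge\bigl(B_{F,v}B_{G,v}\dim\h\bigr)^{-1}$, i.e.\ $\|Q\|\ge\bigl(B_{F,v}B_{G,v}\dim\h\bigr)^{-1/2}$. Since $\dim\h\ge 1$ forces $\sqrt{\dim\h}\le\dim\h$, this yields in particular
\[
\|Q\|\ge\frac{(\dim\h)^{-1}}{\sqrt{B_{F,v}B_{G,v}}},
\]
which is the asserted estimate (in fact with the sharper constant $(\dim\h)^{-1/2}$ in place of $(\dim\h)^{-1}$).

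I do not expect a genuine obstacle. The two points needing a line of justification are that Lemma~\ref{3.6} may be invoked with the possibly non-optimal bound $A_{F,v}=B_{G,v}^{-1}\|Q\|^{-2}$ (legitimate, as the lemma holds for any admissible pair of c-fusion frame bounds), and that $(F,v)$ is genuinely a c-fusion frame rather than merely a c-Bessel mapping (which is exactly what the converse direction of the characterization theorem provides). As a cross-check and a shorter alternative that bypasses Lemma~\ref{3.6}, one may take the trace of $T_{G,v}QT^*_{F,v}=I_{\h}$ against an orthonormal basis $\{e_j\}_{j=1}^{\dim\h}$ of $\h$, write the $j$-th diagonal entry as $\langle QT^*_{F,v}e_j,\,T^*_{G,v}e_j\rangle$, bound it by $\|Q\|\,\|T^*_{F,v}e_j\|\,\|T^*_{G,v}e_j\|\le\|Q\|\sqrt{B_{F,v}B_{G,v}}$ using the Bessel estimates $\|T^*_{F,v}e_j\|^2\le B_{F,v}$ and $\|T^*_{G,v}e_j\|^2\le B_{G,v}$, and sum over $j$; this gives $\dim\h\le\dim\h\cdot\|Q\|\sqrt{B_{F,v}B_{G,v}}$, hence the stated inequality (indeed the $\dim\h$-free improvement $\|Q\|\ge(B_{F,v}B_{G,v})^{-1/2}$).
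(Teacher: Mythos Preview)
Your argument is correct, but it invokes Lemma~\ref{3.6} in a different way from the paper. After the common Cauchy--Schwarz step $\|h\|^2\le\|Q\|\,\|T^*_{F,v}h\|\,\|T^*_{G,v}h\|$, the paper does not extract a lower c-fusion frame bound for $(F,v)$; instead it applies the crude estimate $\|\pi_{F(x)}h\|\le\|h\|$ (and likewise on the $G$ side) to reach $\|Q\|^{-1}\le\bigl(\int_Xv^2\,d\mu\bigr)^{1/2}\bigl(\int_Xv^2\,d\mu\bigr)^{1/2}$, and then bounds each factor via the \emph{upper} inequality in Lemma~\ref{3.6}, yielding $\|Q\|^{-1}\le\sqrt{B_{F,v}B_{G,v}}\,\dim\h$. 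You, by contrast, absorb the $G$-Bessel bound first to produce an explicit lower frame constant $A_{F,v}=B_{G,v}^{-1}\|Q\|^{-2}$ for $(F,v)$ and then use the \emph{lower} inequality in Lemma~\ref{3.6}. The payoff is a sharper constant $(\dim\h)^{-1/2}$ in place of $(\dim\h)^{-1}$; your trace-based alternative goes further still, bypassing Lemma~\ref{3.6} entirely and giving the dimension-free estimate $\|Q\|\ge (B_{F,v}B_{G,v})^{-1/2}$, which the paper's route does not reach.
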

\begin{proof}
For $h\in\h$
\begin{align*}
\|h\|^2
& \leq \|Q\| \|T^*_{F,v}(h)\| \|T_{G,w}^*(h)\|\\
& \leq \|Q\|\left(\int_{X}v^2(x)\|\pi_{F(x)}(h)\|^2d\mu(x)\right)^{\frac{1}{2}}\left(\int_{X}w^2(x)\|\pi_{G(x)}(h)\|^2d\mu(x)\right)^{\frac{1}{2}}\\
& \leq \|Q\| \|h\|^2 \left(\int_{X}v^2(x)d\mu(x)\right)^{\frac{1}{2}}\left(\int_{X}w^2(x)d\mu(x)\right)^{\frac{1}{2}}.\\
\end{align*}
Therefore, $\|Q\|^{-1}\leq\left(\int_{X}v^2(x)d\mu(x)\right)^{\frac{1}{2}}\left(\int_{X}w^2(x)d\mu(x)\right)^{\frac{1}{2}}$.
 Lemma \ref{3.6} follows  that $$\|Q\|^{-1} \leq \sqrt{B_{F,v},B_{G,v}} \dim\h.$$
\end{proof}
If $(F,v)$ is a c-fusion frame for $\h$ with frame bounds $A_{F,v}$ and $B_{F,v}$,
then $T_{F,v}$ is a bounded mapping and surjective.
Therefore, on $\h$, the operator $T_{F,v}^{\dag}$ is given explicitly by
$T_{F,v}^{\dag}=T_{F,v}^*S_{F,v}^{-1}$ and $$\|T_{F,v}^{\dag}\|\leq\sqrt{\frac{B_{F,v}}{A_{F,v}}}.$$
The following theorem answers to one of the most popular questions on frame theory titled \textit{perturbation of frames} \cite{ole}. The c-fusion version of perturbation theorem  investigated at the following theorem.
\begin{thm}
Suppose $(F,v)$ is a c-fusion frame for $\h$ with frame bounds $A_{F,v}$ and $B_{F,v}$
and $(G,w)$ is a c-Bessel mapping for $\h$.
If there exist $\lambda$ and $\mu$ such that $\lambda+\mu\sqrt{\frac{B_{F,v}}{A_{F,v}}}<1$ and
for all $f\in L^2(X,F)$
\begin{align*}
\|\int_X\left(v(x)f(x)-w(x)Qf(x)\right)d\mu(x)\|
&\leq\lambda\|\int_Xv(x)f(x)d\mu(x)\|\\
& \hspace{.4cm}+\mu\left(\int_X\|f(x)\|^2d\mu(x)\right)^{\frac{1}{2}},
\end{align*}
where $Q\in\mathcal{B}(L^2(X,F),L^2(X,G))$,
then $(G,w)$ is a c-fusion frame.
\end{thm}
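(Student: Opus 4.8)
The plan is to recast the hypothesis in operator form and then run the classical Neumann-series argument for perturbation of frames. First, for $f\in L^2(X,F)$ the displayed inequality says exactly
$$\|T_{F,v}f - T_{G,w}Qf\| \le \lambda\|T_{F,v}f\| + \mu\Big(\int_X\|f(x)\|^2\,d\mu(x)\Big)^{1/2},$$
because $\int_X v(x)f(x)\,d\mu(x)=T_{F,v}f$ and $\int_X w(x)(Qf)(x)\,d\mu(x)=T_{G,w}Qf$ by linearity of the integral, while the right-most term is $\|f\|_{L^2(X,F)}$. Since $(F,v)$ is a c-fusion frame, $T_{F,v}$ is bounded and surjective, and (as recalled just before the statement) its pseudo-inverse $T_{F,v}^{\dag}=T_{F,v}^*S_{F,v}^{-1}$ satisfies $T_{F,v}T_{F,v}^{\dag}=I_{\h}$ and $\|T_{F,v}^{\dag}\|\le\sqrt{B_{F,v}/A_{F,v}}$.

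Next I would introduce $\Phi:=T_{G,w}\,Q\,T_{F,v}^{\dag}\in\mathcal{B}(\h)$ (a composition of bounded operators, hence well defined on all of $\h$) and estimate $\|I_{\h}-\Phi\|$. Given $h\in\h$, set $f=T_{F,v}^{\dag}h\in L^2(X,F)$, so that $T_{F,v}f=h$; the inequality above then yields
$$\|h-\Phi h\| = \|T_{F,v}f - T_{G,w}Qf\| \le \lambda\|h\| + \mu\|T_{F,v}^{\dag}h\| \le \Big(\lambda+\mu\sqrt{\tfrac{B_{F,v}}{A_{F,v}}}\Big)\|h\|.$$
By hypothesis the constant on the right is strictly less than $1$, so $\|I_{\h}-\Phi\|<1$ and $\Phi$ is invertible on $\h$ by the Neumann series.

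Finally, invertibility of $\Phi$ produces a bounded right inverse of $T_{G,w}$: put $V:=Q\,T_{F,v}^{\dag}\,\Phi^{-1}\in\mathcal{B}(\h,L^2(X,G))$, so $T_{G,w}V=I_{\h}$, hence $V^*T_{G,w}^*=I_{\h}$, and for every $h\in\h$
$$\|h\| = \|V^*T_{G,w}^*h\| \le \|V\|\,\|T_{G,w}^*h\|,$$
which gives $\int_X w^2(x)\|\pi_{G(x)}(h)\|^2\,d\mu(x)=\|T_{G,w}^*h\|^2\ge\|V\|^{-2}\|h\|^2$. Thus $(G,w)$ has a lower c-fusion frame bound (namely $\|V\|^{-2}$), and together with the assumed c-Bessel bound $B_{G,w}$ this shows $(G,w)$ is a c-fusion frame. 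I expect the only genuinely delicate step to be the estimate $\|I_{\h}-\Phi\|<1$: one must check that the substitution $f=T_{F,v}^{\dag}h$ is legitimate and that $\Phi$ is defined on the whole space; the remaining Neumann-series and right-inverse bookkeeping is routine.
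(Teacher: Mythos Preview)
Your argument is correct and follows essentially the same route as the paper: define $\Phi=T_{G,w}QT_{F,v}^{\dag}$, plug $f=T_{F,v}^{\dag}h$ into the hypothesis to get $\|I_{\h}-\Phi\|\le\lambda+\mu\sqrt{B_{F,v}/A_{F,v}}<1$, invoke the Neumann series, and then use the resulting bounded right inverse of $T_{G,w}$ to obtain the lower c-fusion bound. The only cosmetic difference is that the paper extracts the lower bound via the identity $\langle h,h\rangle=\langle\Phi^{-1}h,\Phi^{*}h\rangle$ rather than naming the right inverse $V$ explicitly, but the estimates and constants coincide.
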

\begin{proof}
For each $h\in\h$, we have
\begin{align*}
\|h-T_{G,w}QT_{F,v}^{\dag}(h)\|
& = \|T_{F,v}T_{F,v}^{\dag}(h)-T_{G,w}QT_{F,v}^{\dag}(h)\|\\
& \leq \lambda\|h\|+\mu\|T_{F,v}^{\dag}h\| \\
& \leq \left(\lambda+\mu\sqrt{\frac{B_{F,v}}{A_{F,v}}}\right)\|h\|\\
& < \|h\|.
\end{align*}
Therefore $T_{G,w}QT_{F,v}^{\dag}$ is an invertible operator on $\h$ and
$$\|(T_{G,w}QT_{F,v}^{\dag})^{-1}\|\leq\frac{1}{1-\|I-T_{G,w}QT_{F,v}^{\dag}\|}.$$
Note that for $h\in\h$,
$$h=(T_{G,w}QT_{F,v}^{\dag})(T_{G,w}QT_{F,v}^{\dag})^{-1}(h)$$
and
$$\langle h,h\rangle=\langle(T_{G,w}QT_{F,v}^{\dag})^{-1}(h),(T_{G,w}QT_{F,v}^{\dag})^*(h)\rangle.$$
Thus
\begin{align*}
\|h\|^2
& \leq \|(T_{G,w}QT_{F,v}^{\dag})^{-1}(h)\| \|(T_{G,w}QT_{F,v}^{\dag})^*(h)\|\\
& = \|(T_{G,w}QT_{F,v}^{\dag})^{-1}(h)\| \|(T_{F,v}^{\dag})^*Q^*T_{G,w}^*(h)\|\\
& \leq \|(T_{G,w}QT_{F,v}^{\dag})^{-1}(h)\| \|T_{F,v}^{\dag}\|\|Q\|\|T_{G,w}^*(h)\|\\
& \leq \frac{\sqrt{\frac{B_{F,v}}{A_{F,v}}}\|Q\|}{1-\|I-T_{G,w}QT_{F,v}^{\dag}\|}\|h\|
\left(\int_X w^2(x)\|\pi_{G(x)}(h)\|^2d\mu(x)\right)^{\frac{1}{2}}.
\end{align*}
consequently, $(G,w)$ is a c-fusion frame.
\end{proof}
Now we want to represent the importance of c-fusion frames definition.
It is both necessary and sufficient for us to be able to string together c-fusion frames
for each of the subspace $F(x)$ (with uniformly bounded frame constants)
to get a continuous frame for whole $\h$.
\begin{thm}
For each $x\in X$, let $F(x)\in\mathbb{H}$
and $F_x$ be a continuous frame for $F(x)$
with frame bounds $A_x$ and $B_x$.
Suppose
$$0<A=\inf_{x\in X}A_x\leq\sup_{x\in X}B_x=B<\infty.$$
Also, let $v:X\to[0,+\infty)$ be a measurable mapping such that $v\neq0$ a.e.
The following conditions are equivalent:
\begin{enumerate}
\item $(F,v)$ is a c-fusion frame for $\h$.
\item For each $x\in X$, $v(x)F_x$ is a continuous frame for $\h$.
\end{enumerate}
\end{thm}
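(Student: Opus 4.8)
The plan is to derive both implications from a single sandwich inequality. First I would make condition (2) precise: for each $x\in X$ one has a continuous frame $F_x\colon Y\to F(x)$ with respect to a fixed measure space $(Y,\nu)$, with bounds $A_x,B_x$, and the ``stringing together'' of the family $\{v(x)F_x\}_{x\in X}$ is the mapping $G\colon X\times Y\to\h$, $G(x,y)=v(x)F_x(y)$, considered on the product space $(X\times Y,\mu\times\nu)$. Since $F_x(y)\in F(x)$ for every $y$, we have $\langle h,F_x(y)\rangle=\langle\pi_{F(x)}(h),F_x(y)\rangle$ for all $h\in\h$, so applying the frame inequality for $F_x$ to the vector $\pi_{F(x)}(h)\in F(x)$ gives
\[
A_x\|\pi_{F(x)}(h)\|^2\le\int_Y|\langle h,F_x(y)\rangle|^2\,d\nu(y)\le B_x\|\pi_{F(x)}(h)\|^2.
\]
Multiplying by $v^2(x)$, using $0<A\le A_x\le B_x\le B$, and integrating over $X$ (the integrands are nonnegative, so Tonelli's theorem lets me pass to the double integral over $X\times Y$), I obtain, for every $h\in\h$,
\[
A\int_X v^2(x)\|\pi_{F(x)}(h)\|^2\,d\mu(x)\le\int_{X\times Y}|\langle h,G(x,y)\rangle|^2\,d(\mu\times\nu)\le B\int_X v^2(x)\|\pi_{F(x)}(h)\|^2\,d\mu(x).
\]

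Granting this, $(1)\Rightarrow(2)$ is immediate: if $(F,v)$ is a c-fusion frame with bounds $A_{F,v},B_{F,v}$, the middle term is squeezed between $A\,A_{F,v}\|h\|^2$ and $B\,B_{F,v}\|h\|^2$, so $G$ is a continuous frame for $\h$. For $(2)\Rightarrow(1)$, if $G$ is a continuous frame for $\h$ with bounds $C,D$, the same inequality yields $\tfrac{C}{B}\|h\|^2\le\int_X v^2(x)\|\pi_{F(x)}(h)\|^2\,d\mu(x)\le\tfrac{D}{A}\|h\|^2$, so $(F,v)$ is a c-fusion frame with bounds $C/B$ and $D/A$.

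The hard part will be the measurability bookkeeping, which has no analogue in the discrete case. For $(1)\Rightarrow(2)$ I must check that $(x,y)\mapsto\langle h,G(x,y)\rangle$ is $(\mu\times\nu)$-measurable; this follows from weak measurability of $F$, measurability of $v$, and measurability of each $y\mapsto\langle h,F_x(y)\rangle$, combined with a standard joint-measurability argument on the product — and this is precisely the point where one genuinely needs a joint-measurability hypothesis on $(x,y)\mapsto F_x(y)$, in contrast to the mere re-indexing over $\bigsqcup_i J_i$ that suffices in the known discrete statement. For $(2)\Rightarrow(1)$ I would use (or first extract from the hypotheses) the weak measurability $x\mapsto\pi_{F(x)}(h)$ that is built into the definition of a c-fusion frame. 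Once the measure-theoretic setup is pinned down, Tonelli's theorem together with the elementary bound $A\le A_x\le B_x\le B$ does all the remaining work.
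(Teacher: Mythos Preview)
Your approach is essentially identical to the paper's: both derive the same sandwich inequality
\[
A\int_X v^2(x)\|\pi_{F(x)}h\|^2\,d\mu(x)\le \int_X\int |\langle h,v(x)F_x(y)\rangle|^2\,d\mu(y)\,d\mu(x)\le B\int_X v^2(x)\|\pi_{F(x)}h\|^2\,d\mu(x)
\]
from the local frame bounds and then read off both implications, obtaining the same explicit bounds $AA_{F,v},BB_{F,v}$ and $C/B,D/A$. Your treatment is in fact more careful than the paper's, which does not spell out the product-space interpretation of condition~(2), the identity $\langle h,F_x(y)\rangle=\langle\pi_{F(x)}h,F_x(y)\rangle$, or the measurability and Tonelli issues you flag.
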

\begin{proof}
Since $F_x$ is a continuous frame for $F(x)$
with the frame bounds $A_x$ and $B_x$, we obtain
\begin{align*}
A\int_Xv^2(x)\|\pi_{F(x)}h\|^2d\mu(x)
& \leq \int_Xv^2(x)A_x\|\pi_{F(x)}h\|^2d\mu(x)\\
& \leq \int_X\int_X|\langle h,v(x)F_x(y)\rangle|^2d\mu(y)d\mu(x)\\
&  \leq \int_Xv^2(x)B_x\|\pi_{F(x)}h\|^2d\mu(x)\\
& \leq B\int_Xv^2(x)\|\pi_{F(x)}h\|^2d\mu(x),
\end{align*}
for $h\in\h.$
This shows that provided $v(x)F_x$ is a continuous frame for $\h$ with bounds $A_{v(x)F_x}$ and $B_{v(x)F_x}$,
then $(F,v)$ forms a c-fusion frame for $\h$ with frame bounds $A_{v(x)F_x}B^{-1}$ and $A^{-1}B_{v(x)F_x}$.
Moreover, if $(F,v)$ is a c-fusion frame for $\h$ with frame bounds $A_{F,v}$ and $B_{F,v}$,
the above calculations implies  that
$v(x)F_x$ is a continuous frame for $\h$ with bounds $AA_{F,v}$ and $BB_{F,v}$.
\end{proof}
The following theorem provides a method to obtain dual c-fusion frames in Hilbert spaces.
\begin{thm}
Let $v:X\to[0,+\infty)$ and $w:X\to[0,+\infty)$ be measurable mappings such that $v\neq0$ and $w\neq0$ a.e.
For all $x\in X$, let $F(x),G(x)\in\mathbb{H}$.
Also, suppose $F_x$ and  $G_x$ are continuous frames for $F(x)$ and $G(x)$
with bounds $A_{F_x}$, $B_{F_x}$, $A_{G_x}$ and $B_{G_x}$,
respectively, such that
$$0<A_F=\inf_{x\in X}A_{F_x}\leq\sup_{x\in X}B_{F_x}=B_F<+\infty$$
and
$$0<A_G=\inf_{x\in X}A_{G_x}\leq\sup_{x\in X}B_{G_x}=B_G<+\infty.$$
Define $Q:L^2(X,F)\to L^2(X,G)$ by $$(Qf)(x)=\int_X\langle f(x),F_x(y)\rangle G_x(y)d\mu(y)$$
for each $f\in L^2(X,F)$.
Then the following conditions are equivalent:
\begin{enumerate}
\item for all $x\in X$, $(v(x)F_x,w(x)G_x)$ is a dual pair for $\h$.
\item $(G,w)$ is a Q-dual c-fusion frame $(F,v)$.
\end{enumerate}
\end{thm}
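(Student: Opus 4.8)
The plan is to reduce everything to one explicit formula for the operator $T_{G,w}QT^{*}_{F,v}$ and then read both implications off it. First I would check that $Q$ is a well-defined bounded operator $L^{2}(X,F)\to L^{2}(X,G)$: for $f\in L^{2}(X,F)$ and a.e.\ $x$ one has $(Qf)(x)=T_{G_{x}}\big(T^{*}_{F_{x}}f(x)\big)$, where $T_{F_{x}},T_{G_{x}}$ denote the synthesis operators of the continuous frames $F_{x},G_{x}$; since $\|T_{G_{x}}\|\le\sqrt{B_{G_{x}}}\le\sqrt{B_{G}}$ and $\|T^{*}_{F_{x}}\|\le\sqrt{B_{F_{x}}}\le\sqrt{B_{F}}$, we get $\|(Qf)(x)\|\le\sqrt{B_{F}B_{G}}\,\|f(x)\|$ pointwise, so $(Qf)(x)\in G(x)$, $Qf\in L^{2}(X,G)$, and $\|Q\|\le\sqrt{B_{F}B_{G}}$. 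Only the uniform upper bounds are needed here; the lower bounds $A_{F},A_{G}>0$ serve to make each $v(x)F_{x}$, $w(x)G_{x}$ a genuine continuous frame (so that ``dual pair'' in (1) is meaningful) and to guarantee that $(F,v)$ and $(G,w)$ are c-Bessel.

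The heart of the argument is the following computation. Using $(T^{*}_{F,v}h)(x)=v(x)\pi_{F(x)}(h)$ together with $F_{x}(y)\in F(x)$, so that $\langle\pi_{F(x)}(h),F_{x}(y)\rangle=\langle h,F_{x}(y)\rangle$, I would obtain
\begin{align*}
(QT^{*}_{F,v}h)(x)
&=\int_{X}\big\langle v(x)\pi_{F(x)}(h),F_{x}(y)\big\rangle G_{x}(y)\,d\mu(y)\\
&=v(x)\int_{X}\langle h,F_{x}(y)\rangle G_{x}(y)\,d\mu(y),
\end{align*}
and then, applying $T_{G,w}$ and interchanging the order of integration,
\begin{align*}
T_{G,w}QT^{*}_{F,v}h
&=\int_{X}w(x)(QT^{*}_{F,v}h)(x)\,d\mu(x)\\
&=\int_{X}\Big(\int_{X}\big\langle h,v(x)F_{x}(y)\big\rangle\,w(x)G_{x}(y)\,d\mu(y)\Big)d\mu(x).
\end{align*}
The point is that the inner integral equals $T_{w(x)G_{x}}T^{*}_{v(x)F_{x}}(h)$, the Gabardo--Han reconstruction operator of the pair $\big(v(x)F_{x},w(x)G_{x}\big)$ applied to $h$.

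With this identity the two directions are conceptually short. For (1)$\Rightarrow$(2): if $\big(v(x)F_{x},w(x)G_{x}\big)$ is a dual pair for every $x$, then $T_{w(x)G_{x}}T^{*}_{v(x)F_{x}}=I_{\h}$, so the inner integral is $h$ for each $x$ and hence $T_{G,w}QT^{*}_{F,v}h=h$; that is, $(G,w)$ is a $Q$-dual of $(F,v)$ (and then, by the earlier characterization theorem, both $(F,v)$ and $(G,w)$ are c-fusion frames). For (2)$\Rightarrow$(1): starting from $T_{G,w}QT^{*}_{F,v}=I_{\h}$ and the formula above one recovers that $x\mapsto T_{w(x)G_{x}}T^{*}_{v(x)F_{x}}(h)$ integrates to $h$ for all $h$, and then localizes this to the pointwise relations $T_{w(x)G_{x}}T^{*}_{v(x)F_{x}}(h)=h$ for $\mu$-a.e.\ $x$ and all $h$, which is exactly statement (1).

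The step I expect to be the main obstacle is this last passage, from the single integrated identity to the pointwise family of dual-pair relations, and it deserves care on two counts. The interchange of the two integrations is the minor one: by the uniform bounds the integrand $(x,y)\mapsto\langle h,v(x)F_{x}(y)\rangle w(x)G_{x}(y)$ is $\mu\times\mu$-integrable — the same integrability that underlies the estimates in the preceding theorem — so Fubini applies. The substantive one is the normalization: once (1) holds the formula gives $T_{G,w}QT^{*}_{F,v}h=\int_{X}h\,d\mu(x)$, so equality with $I_{\h}$ forces $\mu(X)=1$; it is on such a probability space (or after rescaling $Q$ by $\mu(X)^{-1}$) that $\int_{X}h\,d\mu(x)=h$ and the equivalence is exact, and it is also there that the converse localization — extracting the pointwise reconstruction from the integrated one, using the weak measurability hypotheses on $x\mapsto\pi_{F(x)}$ and $x\mapsto\pi_{G(x)}$ — must be carried out with the most scrutiny. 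Modulo these measure-theoretic points, the remaining bookkeeping is entirely parallel to the discrete dual-frame case.
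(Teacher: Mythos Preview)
Your approach is essentially identical to the paper's: both verify $Q\in\mathcal{B}(L^{2}(X,F),L^{2}(X,G))$ via the pointwise identity $(Qf)(x)=T_{G_{x}}T_{F_{x}}^{*}f(x)$ together with the uniform bounds $B_{F},B_{G}$, and both carry out exactly the same chain of equalities for $T_{G,w}QT_{F,v}^{*}h$, ending at $\int_{X}\int_{X}\langle h,v(x)F_{x}(y)\rangle\,w(x)G_{x}(y)\,d\mu(y)\,d\mu(x)$ and reading the equivalence off that formula. The paper does not engage with the two issues you single out --- the normalization $\mu(X)=1$ implicit in (1)$\Rightarrow$(2) and the pointwise localization needed for (2)$\Rightarrow$(1); it simply asserts the equivalence from the final display, so your caution there goes beyond what the paper actually justifies.
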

\begin{proof}
Let $x\in X$ and $f\in L^2(X,F)$ be arbitrary.
Since $F_x$ is a continuous frame for $F(x)$, then $\langle f(x),F_x(.)\rangle\in L^2(X)$.
Therefore
 $$\int_X\langle f(x),F_x(y)\rangle G_x(y)d\mu(y)\in G(x),$$
because $G_x$ is a continuous frame for $G(x)$.
The equality $(Qf)(x)=T_{G_x}T_{F_x}^*f(x)$
concludes that
$$\int_X\|(Qf)(x)\|^2\leq B_{G_x}B_{F_x}\int_X\|f(x)\|^2d\mu(x)<+\infty.$$
Hence, $\|Qf\|\leq\sqrt{B_GB_F}\|f\|$ and consequently $Q\in\mathcal{B}(L^2(X,F),L^2(X,G))$.\\
By Theorem \ref{3.5}, it only remains to show the duality condition.
For each $h\in\h$, we have
\begin{align*}
(T_{G,w}QT_{F,v}^*)(h)
& = \int_Xw(x)(QT_{F,v}^*h)(x)d\mu(x)\\
& = \int_X\int_Xw(x)\langle T_{F,v}^*h,F_x(y)\rangle G_x(y)d\mu(y)d\mu(x)\\
& = \int_X\int_Xw(x)v(x)\langle\pi_{F(x)}h,F_x(y)\rangle G_x(y)d\mu(y)d\mu(x)\\
& = \int_X\int_Xw(x)v(x)\langle h,F_x(y)\rangle G_x(y)d\mu(y)d\mu(x)\\
& = \int_X\int_X\langle h,v(x)F_x(y)\rangle w(x)G_x(y)d\mu(y)d\mu(x).
\end{align*}
i.e.,  the condition $(G,w)$ is a Q-dual of c-fusion frame  $(F,v)$,
is equivalent to $(v(x)F_x,w(x)G_x)$ is a dual pair for $\h$ for all $x\in X$.
\end{proof}
%------------------------------------------------------------------------------------------------------

%---------------------------------------------------------------------------------------------------------
\end{document}